\newtheorem{theorem}{Theorem}[section]
\newtheorem{lemma}[theorem]{Lemma}
\theoremstyle{definition}
\theoremstyle{remark}
\numberwithin{equation}{section}
\theoremstyle{plain}
\newtheorem{thm}{Theorem}[section]
\theoremstyle{definition}
\newtheorem{dfn}[thm]{Definition}
\theoremstyle{remark}
\newtheorem{rmk}[thm]{Remark}
\DeclareMathOperator{\Tr}{Tr}
\begin{document}

\title{Five not-so-easy pieces{:}\ open problems about vertex rings}

\author{Geoffrey Mason}
\address{Department of Mathematics, University of California, Santa Cruz}
\curraddr{}
\email{gem@ucsc.edu}
\thanks{We thank the Simons Foundation grant $\#427007$, for its support.}

\subjclass[2010]{Primary 17B99}

\date{}

\begin{abstract}
We present five open problems in the theory of vertex rings.\ They cover a variety of different
areas of research where vertex rings have been, or are threatening to be, relevant.\ They have also been chosen because I personally find them interesting,\ 
and because I think each of them has a chance (the title of the paper notwithstanding!) of being solved.\ In each case we give some explanatory background
and motivation, sometimes including proofs of special cases.\ Beyond vertex rings \emph{per se}, the topics covered include connections 
to \emph{real Lie theory, formal group laws, modular linear differential equations, Pierce bundles, and genus 2 Siegel modular forms and the Moonshine Module.}
\end{abstract}

\maketitle

 \section{Introduction}
 The organizers of this Conference kindly asked if I might be interested in writing a paper about some open problems in 
 the theory of vertex operator algebras.\ I liked the sound of the idea but they provided no further guidance, and I struggled with
 the question of an appropriate format and likely topics.\ Here's what I have \emph{not} included: readvertising well-known current problems,
 for example the question of Mathieu Moonshine and its umbral variants \cite{EOT}, \cite{CHD}, \cite{TG}, though this is indeed a fascinating 
 area.\ Similarly, I've skipped the vast question of explaining the parallels between VOA theory and the theory of subfactors.\ I thought it could be 
 worthwhile to discuss the following question:\ why is it so hard to construct VOAs?\ The long struggle to rigorously construct the VOAs on Schellekens list (i.e., the holomorphic,
 $c{=}24$ VOAs), now complete thanks to the efforts of many, is an illustration of this question.\ The putative Haggerup VOA of Terry Gannon would have made a worthy centerpiece
 of such a discussion.\ However, Gannon has recently circulated a preprint on this subject  \cite{TG1} and in doing so he unwittingly vitiated my idea.
 
 \medskip
 In the end I've included questions according to the following criteria{:} (i) I've thought about them and find them interesting; (ii) they collectively exhibit some diversity of topic;
 (iii) I believe they're all doable.\
 The  titles of each Section are as follows{:}
  
 \medskip\noindent
 -\  Real forms of vertex operator algebras.\\
 -\  Vertex rings and formal group laws.\\
 -\ Vertex operator algebras and modular linear differential equations.\\
 -\ Pierce bundles of local vertex rings.\\
 -\ Genus 2 Monstrous Moonshine.

\section{Real forms of vertex operator algebras}
The following is a fundamental fact in Lie theory: let $\frak{g}$ be a \emph{semisimple} complex Lie algebra.\ Then $\frak{g}$ has a
\emph{compact real form}.\ (Compact means that the restriction of the Killing form to a Cartan subalgebra of a real form of $\frak{g}$  is \emph{negative-definite}.)\\
\\
$\mathbf{Problem\ 1}$.\ Describe the class of complex VOAs $V$ which have an analog of this theorem.\ Do strongly regular VOAs always have a real form?

\medskip
Some discussion is appropriate to provide context and meaning to this problem.

\medskip
In the following, by a VOA we always mean a \emph{complex} VOA.\ For the sake of clarity it is sometimes convenient to include a subscript on vertex operators and modes to indicate which Fock space is intended:\ thus vertex operators for $V$ are $Y_V(u, z){=}\sum_n u_V(n)z^{-n-1}$, etc.

\medskip
Let $V$ be a complex linear space.\ We follow what is fairly standard notation and let $V^{\mathbf{R}}$ denote the same set $V$  \emph{regarded as a real linear space}.\ If $V$ is also a Lie algebra then $V^{\mathbf{R}}$ is naturally a real Lie algebra.\ What about VOAs?
If $V$ is a VOA with 
$u{\in}V^{\mathbf{R}}$ we can still define vertex operators
\begin{eqnarray*}
Y_{V^{\mathbf{R}}} (u, z){:=}\sum_n u_V(n)z^{-n-1}.
\end{eqnarray*}
This means that $u_{V^{\mathbf{R}}}(n)$ is the mode $u_V(n)$ \emph{regarded as a real operator on $V^{\mathbf{R}}$}.

\medskip
Let 
$\omega{\in}V$ be the Virasoro element of $V$ whose modes $L(n)$  satisfy the standard identity
\begin{eqnarray*}
[L(m), L(n)]{=}(m{-}n)L(m{+}n){+}\frac{m^3{-}m}{12}\delta_{m, -n}cId_V,
\end{eqnarray*}
where $c$ is the \emph{central charge} of $V$.\ The statement that $\omega$ is a Virasoro element of $V^{\mathbf{R}}$ \emph{makes no sense
unless $c$ is a real number}.\

\medskip
We shall say that $\omega$ is \emph{real} if the central charge $c$ that it determines is also real.\
 (We do not want to say that $V$ is real in this situation; it could be misleading.)

\medskip\noindent
\begin{dfn}\label{defreal} Let $(V, Y, \mathbf{1}, \omega)$ be a (complex) VOA with vertex operator $Y$, vacuum element $\mathbf{1}$, and a real Virasoro element $\omega$.\ A \emph{real} form of $V$ is a \emph{conformal subVOA} $U{\subseteq} V^{\mathbf{R}}$ such that
\begin{eqnarray}\label{defrealform}
V^{\mathbf{R}}{=}U \oplus iU.
\end{eqnarray}
\end{dfn}

A conformal subVOA of $V^{\mathbf{R}}$ is a real subVOA $(U, Y, \mathbf{1}, \omega)$ whose vacuum and Virasoro elements are the same as that of $V$.\ Note that
 multiplication by $i$ defines an isomorphism of $U$-modules $i{:}U\stackrel{\cong}{\longrightarrow} iU$.\ With a decomposition such as (\ref{defrealform}) in hand we can say, without invoking any ambiguity, that the states in $U$ are the \emph{real states} and those in $iU$ are the \emph{imaginary states}.\ For example, $\mathbf{1}$ and $\omega$ are both real states.\
If $a$ and $b$ are real states, and if $u{:=}a{+}ib{\in}V^{\mathbf{R}}$,  then
\begin{eqnarray}\label{mode}
Y_{V^{\mathbf{R}}}(u, z){=} Y_U(a{-}b, z){\oplus}iY_U(a{+}b, z).
\end{eqnarray}

\medskip
 Let us now assume that $V$ is of \emph{strong CFT-type}.\ In particular  $V$ is a \emph{simple} VOA with a conformal decomposition of the shape
 \begin{eqnarray*}
V{=}\mathbf{C}\mathbf{1}{\oplus}V_1{\oplus}\hdots,
\end{eqnarray*}
 furthermore $V_1$ consists of \emph{quasiprimary states}, i.e., if $u{\in}V_1$ then $L(1)u{=}0$.\ With this set-up, Haisheng Li has shown
 \cite{Li1} that  $V$ has, up to scalars, a unique \emph{invariant bilinear form} 
 \begin{eqnarray*}
b{:}V{\times}V\rightarrow\mathbf{C}.
\end{eqnarray*}
For the definition of an invariant bilinear form for a VOA, cf.\  \cite{FHL}, \cite{Li1}. In particular, reference \cite{FHL} establishes that such a form is necessarily \emph{symmetric}.\ Furthermore because $V$ is simple then $b$ is \emph{nondegenerate}, in particular its restriction to each homogeneous space
$V_n$ is also nondegenerate.\ We may, and shall,  canonically \emph{normalize} $b$ so that
\begin{eqnarray*}
b(\mathbf{1}, \mathbf{1}){=}{-}1.
\end{eqnarray*}
\emph{The resulting bilinear form is the best VOA analog of the Killing form for a semisimple Lie algebra.}

\medskip
Next we observe that Li's theory applies equally well to real VOAs as well as complex VOAs.\ In particular, a real VOA of $CFT$-type
has a unique normalized, \emph{real-valued}, invariant, bilinear form.

\medskip
Let us now assume that $V$ is not only of strong $CFT$-type, but in addition it has a real Virasoro element $\omega$ and a real form $U$.\ Because $\omega{\in}U$  it follows from (\ref{mode}) that
\begin{eqnarray*}
L_V(n){=}L_U(n){+}iL_U(n).
\end{eqnarray*}
Thus $V_n{=}U_n{+}iU_n$.\ It follows that $U_0{=}\mathbf{R}\mathbf{1}$ and $U_1$ is annihilated by $L_U(1)$, so that $U$ is a real VOA of $CFT$-type.\
As a result, $U$ also has unique normalized, invariant, real-valued, bilinear form.

\begin{lemma}\label{lemb} Suppose that $V$ is a (complex) VOA that has strong $CFT$-type, a real Virasoro element and a real form $U$.\ Let $b$ be the normalized invariant bilinear form
on $V$.\ Then  the restriction of $b$ to $U{\times}U$ is the normalized real-valued, invariant, bilinear form on $U$.\ 
\end{lemma}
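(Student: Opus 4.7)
The plan is to reduce the lemma to the uniqueness of the normalized real-valued invariant bilinear form on the real VOA $U$, which was recalled just before the statement. Let $\beta := b|_{U \times U}$. Because $b$ is symmetric and $\mathbf{C}$-bilinear on $V \times V$, its restriction $\beta$ is symmetric and $\mathbf{R}$-bilinear on $U \times U$, though a priori it is $\mathbf{C}$-valued. Decompose $\beta = \beta_1 + i\beta_2$ into real and imaginary parts.

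I would then verify that $\beta_1$ and $\beta_2$ are each invariant, real-valued, symmetric bilinear forms on the real VOA $U$. Symmetry and $\mathbf{R}$-bilinearity are immediate. For invariance, observe that $U$ being a conformal subVOA of $V^{\mathbf{R}}$ (sharing vacuum and Virasoro element with $V$) means $Y_U(u,z)v = Y_V(u,z)v$ and $L_U(n) = L_V(n)|_U$ for all $u,v \in U$ and all $n$, since $L(n) = \omega_{(n+1)}$ and $\omega \in U$. Therefore the $\mathbf{C}$-valued invariance identity for $b$ on $V$, evaluated on triples from $U$, is literally the invariance identity for $\beta$ on $U$. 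Separating real and imaginary parts of this identity, an $\mathbf{R}$-linear operation acting on a formal series with $U$-coefficients, yields the invariance identities for $\beta_1$ and $\beta_2$ individually.

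The uniqueness statement for invariant bilinear forms on $U$ then gives $\beta_j = \lambda_j b_U$ for scalars $\lambda_j \in \mathbf{R}$. Using $\beta(\mathbf{1},\mathbf{1}) = b(\mathbf{1},\mathbf{1}) = -1$ together with $b_U(\mathbf{1},\mathbf{1}) = -1$, we read off $\lambda_1 = 1$ and $\lambda_2 = 0$, so $\beta = b_U$ as required. The only subtle ingredient is the transfer of the invariance identity from $V$ to $U$; once one recognizes that all ingredients of that identity (the vertex operators, the operators $L(0)$ and $L(1)$, the vacuum, and the $L(0)$-grading) are shared between $V$ and its conformal real form $U$, the rest reduces to the scalar check at $(\mathbf{1},\mathbf{1})$.
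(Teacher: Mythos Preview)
Your proof is correct and follows essentially the same approach as the paper's: decompose the restriction into real and imaginary parts, observe that each is an invariant bilinear form on $U$, invoke uniqueness up to real scalars, and then pin down the scalars using the normalization at $(\mathbf{1},\mathbf{1})$. The only cosmetic difference is that the paper compares $\Im(b|_U)$ directly to $\Re(b|_U)$ rather than comparing both to the a priori given normalized form on $U$, but the content is identical.
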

\begin{proof}
The restriction $b_U$ of $b$ to $U$ is an invariant bilinear form on $U$, and we have to show that it is
\emph{real-valued}.

\medskip
The real and imaginary parts of $b$, $\Re(b)$ and $\Im(b)$ both define real-valued, invariant, bilinear forms on $U$, so there is a real scalar
$y$ such that  $y\Re(b_U){=} \Im(b_U)$.\ Hence restriction to $U$ satisfies $b_U{=}\Re(b_U){+}i\Im(b_U){=}(1{+}iy)\Re(b_U)$.\ We now have
\begin{eqnarray*}
{-}1{=}b_U(\mathbf{1}, \mathbf{1}){=}(1{+}iy)({-}1),
\end{eqnarray*}
so $y{=}0$.\ This proves the Lemma.
\end{proof}

Problem 1 may now make sense.\ Given a (complex) VOA satisfying the conditions of Lemma \ref{lemb} we have two related and canonically defined
invariant bilinear forms, namely the $\mathbf{C}$-valued normalized, invariant, bilinear form $b$ on $V$ and its restriction to the $\mathbf{R}$-valued normalized, invariant, bilinear form on $U$.\ This is a VOA analog of the Lie algebra scenario in which we have a semisimple Lie algebra $\frak{g}$ and the restriction of the Killing form to a Cartan subalgebra of a real form for $\frak{g}$.

\medskip
Problem 1 has two parts.\ Unlike the case of semisimple Lie algebras, there is currently no available result guaranteeing that a suitable class of
VOAs (satisfying the hypotheses of Lemma \ref{lemb}, say) actually has a real form.\ One part of Problem 1 asks for some resolution of this problem, and 
specifically asks if VOAs that are \emph{strongly regular} might fit the bill.\ Strongly regular VOAs are the nicest class of VOAs of all \cite{M1}:\ by definition
they are of  strong $CFT$-type and they are also rational and $C_2$-cofinite.\ If $V$ is strongly regular then the central charge is
\emph{rational}  \cite{DLM}, so that the Virasoro element is certainly real.\ However it is generally unknown if $V$ has a real form.\ Many strongly regular VOAs are known to have a $\mathbf{Z}$-form \cite{DG1}, \cite{DG2}, so they certainly have a real form.\ \emph{A strongly regular VOA without a real form would itself be interesting.}\ 

\medskip
The other part of Problem 1 deals with \emph{compactness}, i.e., the (negative-) definiteness of some real bilinear forms.\ It is somewhat vague
in detail, and deliberately so.\ Signatures of real bilinear forms have not played a big part in VOA theory, although the fact that the Griess algebra
$V_2^{\natural}$ of the Moonshine module supports a real and compact bilinear form is important for Norton identities in monstrous moonshine \cite{Mats}.\ Beyond this I am unaware of 
even so much as a passing reference to such questions, save for the comment of Kac and Raina at the end of Lecture 8 in \cite{KR}.

\medskip
One analog of the Lie theory set-up goes as follows: with earlier notation, and assuming $V$ has a real form $U$, can we choose $U$ so that 
the restriction of $b$ to each $U_n$ is compact? This seems like too much to expect in general, but what about lattice theories $V_L$?\ In the Lie theory set-up we only consider restrictions to a Cartan subalgebra, so for VOAs we might hope to find some 
real subVOA $U'{\subseteq}U$, not too small, such that $b$ is compact on each $U'_n$, or at the very least, $b$ has a computable signature on each $U'_n$.

\medskip
Suppose that $V$ is strongly regular.\ Then the Lie algebra on $V_1$ is  \emph{reductive}, and in some sense a Cartan subalgebra of $V_1$
plays the r\^{o}le of a Cartan subalgebra of $V$.\ (For more on this perspective, see \cite{M1}).\ By the Lie theory we can find a Cartan subalgeba of $V_1$ having a compact real form.\
Perhaps this is all that one can hope for?\ On the other hand by Theorem 1 of \cite{M1}, for any Cartan subalgebra $C{\subseteq}V_1$ we can find
a lattice theory subVOA $V_{L}{\cong}W{\subseteq}V$ such that $L{\subseteq}C$ is cocompact.\ This suggests taking $C$ to have a compact real form and
looking for $U'$ in $W$.\ Can we take $U'$ to be a real form of $W$?

\section{Vertex Rings and Formal Group Laws}

\noindent
$\mathbf{Problem\ 2}.$  Let $k$ be a commutative ring, $F$ a formal group law over $k$ and $V$ a vertex $k$-algebra.\ What are the
axioms for an $F$-vertex $k$-algebra?

\medskip
 Haisheng Li made  some substantial (and surprising, to me) contributions towards fostering the connections between VOAs and formal group laws (FGLs) in \cite{Li2}.\
  Li showed how to modify the weak associativity axiom for a VOA using a FGL $F$.\ In this way he produced what he called a \emph{vertex $F$-algebra},
 which is a variant of a VOA that satisfies axioms resembling the locality axioms  (i.e., the Jacobi identity, locality, and weak commutativity and associativity) but are modified by $F$.

\medskip
Problem 2 asks for a generalization of Li's results to \emph{vertex rings} rather than VOAs.\ (In our nomenclature it was convenient to replace vertex $F$-algebra with $F$-vertex algebra, but be assured they mean the same thing.)\ Vertex rings are just like VOAs, but the scalars lie in an  arbitrary commutative ring 
$k$ rather  than a field of characteristic $0$ such as $\mathbf{C}$.\ An axiomatic approach to  vertex rings is given in \cite{M3}.\ Although the axioms for a vertex ring are virtually identical to those for a VOA, the lack of denominators in a vertex ring hampers attempts to prove analogs of results for VOAs, which become more complicated, or plain wrong, or meaningless.\ 

\medskip
So it is with FGLs.\ Working with a FGL $F$ over a commutative ring $k$ which is a $\mathbf{Q}$-algebra is facilitated by the existence of a \emph{logarithm} for $F$, implying that
$F$ is isomorphic to the additive group  law.\ Sure enough,  in his applications to VOAs Li makes heavy use of the existence of a logarithm.\ Thus the main point of
Problem 2 is to  reproduce Li's results without using logarithms! (Well, you may use them, but they should not figure in the final answer!)

\medskip
If one is going to meld FGLs with  vertex algebra theory, it seems most natural to do it for vertex rings without denominators, for that is the natural domain in which the theory of FGLs resides.\ And by working at this level of generality one gains access to what could be  some of the most interesting FGL's for vertex algebra theory.\ We have in mind
Euler's FGL associated to an elliptic integral, the Lazard universal FGL, and the FGLs of generalized cohomology theories.

\medskip
In the rest of this Section I will review some of the relevant background about FGLs designed to make the preceding paragraphs intelligible.\ Then I will answer  Problem 2 in the special case of vertex rings of type
$(k, \underline{D})$, where $k$ is, as before, an arbitrary commutative ring and $\underline{D}$ is an \emph{iterative Hasse-Schmidt derivation} of $k$.\ These are the easiest vertex rings that are \emph{not} VOAs (cf.\ \cite{M3}, Theorem 5.5), and they provide a lifeline to the classical theory of rings with derivation \cite{Matsumura}.\ This special  case may suggest what to expect when attacking Problem 2 in full generality.

\medskip
For somewhat different approaches to formal groups and FGLs, we may refer the reader to the encyclopedic text of Hazewinkel \cite{H}, and a more scheme-theoretic approach 
in a course of  Strickland \cite{S} which I found on the internet.\ 

\medskip
Fix a unital commutative ring $k$.\ A \emph{$1$-dimensional commutative formal group law} (FGL) over $k$ is a power series $F(X, Y){\in}k[[X, Y]]$ satisfying the following properties:
\begin{eqnarray*}
&&(a)\  F(X, 0){=}X, F(0, Y){=}Y,\\
&&(b)\ F(F(X, Y), Z){=}F(X, F(Y, Z)),\\
&&(c)\ F(X, Y){=}F(Y, X).
\end{eqnarray*}
These look like the right- and left-identity, associativity and commutativity axioms for an abelian group\ They should be supplemented by an axiom for inverses, but that's not necessary 
because  it is readily proved (formal implicit function theorem) that
\begin{eqnarray*}
\mbox{there is a power series $\iota(X){\in}k[[X]]$ such that}\ F(X, \iota(X)){=}0.
\end{eqnarray*}
We have
\begin{eqnarray*}
F(X, Y){=}X{+}Y{+}\sum_{i, j\geq 0} c_{ij}X^iY^j\ \ (c_{ij}{\in}k).
\end{eqnarray*}
For `nice' rings $k$ it is true that (c) is a \emph{consequence} of (a) and (b) (cf. \cite{H}, Theorem 6.1 for a precise statement).\

\medskip
There are the following basic examples of FGLs, defined for every $k$:
\begin{eqnarray*}
&&\ F_a(X, Y){:=}X{+}Y,\\
&&F_m(X, Y){:=}X{+}Y{+}XY.
\end{eqnarray*}
These are the \emph{additive} and \emph{multiplicative} FGLs respectively.

\medskip 
Let us introduce the notation 
\begin{eqnarray*}
X{+}_F\ Y{=} F(X,Y).
\end{eqnarray*}
It suggests how to produce a \emph{new abelian group structure} on $k[[X]]$ as follows:
\begin{eqnarray*}
a(X){+}_F\ b(X){:=} F(a(X),b(X)).
\end{eqnarray*}

\medskip
\emph{This idea goes to the heart of how and why FGLs may be used to modify a vertex ring.}

\medskip
A \emph{Hasse-Schmidt derivation} (HS)  $\underline{D}{:=}(D_0, D_1, D_2, ...)$ of $k$  is a sequence of endomorphisms $D_m{:}k\rightarrow k$
satisfying 
\begin{eqnarray*}
&&D_0{=}Id_k,\ \ D_m(1){=}0\ (m{\geq}1),\\
&&D_m(uv){=}\sum_{i{+}j=m} D_i(u)D_j(v)\ \ (m\geq0, u, v{\in} k)
\end{eqnarray*}
An HS derivation $\underline{D}$ is called \emph{iterative} if it also satisfies the identity
\begin{eqnarray}\label{iterative}
D_i\circ D_j{=}{i{+}j\choose i}D_{i+j}.
\end{eqnarray}

There is a very interesting generalization of the idea of an iterative HS derivation that uses a FGL, cf. \cite{Matsumura}, \cite{HK}.\ This goes as follows{:}\
\ Let $F(X, Y)$ be a FGL over $k$ and let $\underline{D}$  be an HS derivation of $k$.\ We call $\underline{D}$ an \emph{HS $F$-derivation} if it satisfies the
following identity{:}

\begin{eqnarray}\label{HSF}
\sum_{i, j\geq0} D_j\circ D_i(u)X^iY^j{=}\sum_{n\ge0} D_n(u)(X{+}_FY)^n\ \ (u{\in}k).
\end{eqnarray}

It can be checked \cite{Matsumura} that an HS derivation $\underline{D}$ is iterative if, and only if, it is an $F_a$-derivation (additive FGL).\ So this really \emph{is} a generalization of iterativity.\
This concept has been well-studied in the
literature.\
For  further details in the case when $F$ is the multiplicative FGL $F_m$, cf.\  \cite{CR}, where one gets a glimpse of the rather complicated identities satisfied by the operators $D_m$ that must replace (\ref{iterative}).
 
 \begin{rmk} It may not be quite clear why one needs an FGL in (\ref{HSF}) as opposed to some other power series.\ This point is discussed in \cite{HK}.
 \end{rmk}
 
\medskip
Now let us turn to the vertex rings $(k, \underline{D})$ where $\underline{D}$ is an HS-derivation of $k$.\ They are defined as follows (\cite{M3}, Section 5.2){:}\ the vacuum element is $1{\in}k$ and
vertex operators are defined by
\begin{eqnarray}\label{vertops}
Y(u, z)v{:=}\sum_{n\geq 0} D_{n}(u)vz^n\ \ (u, v{\in}k).
\end{eqnarray}
The relevant result is
\begin{thm}\label{thm3.1}(\cite{M3}, Theorems 3.5{+}5.5) $(k, \underline{D})$ is a vertex ring if, and only if, $\underline{D}$ is \emph{iterative}. $\hfill\Box$
\end{thm}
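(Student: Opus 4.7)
The plan is to reduce the theorem to a single nontrivial axiom, weak associativity, since all the other vertex-ring axioms for $(k,\underline{D})$ fall out for free from the HS-derivation axioms plus commutativity of $k$. Concretely, the vacuum axiom is clear because $D_{0}{=}\id$ and $D_{n}(1){=}0$ for $n{\geq}1$, so $Y(1,z){=}\id$ and $Y(u,z)1{=}\sum_{n} D_{n}(u)z^{n}$ has constant term $u$. Locality (in its degenerate commutative-ring form) is automatic: the coefficients of $Y(u,z_1)Y(v,z_2)w$ and $Y(v,z_2)Y(u,z_1)w$ are $D_m(u)D_n(v)w$ and $D_n(v)D_m(u)w$ respectively, which coincide because all elements of $k$ commute. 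So the whole content of Theorem \ref{thm3.1} sits inside the associativity identity $Y(Y(u,z_1)v,z_2)w{=}Y(u,z_1{+}z_2)Y(v,z_2)w$.

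Next I would expand both sides of this identity directly from the definition (\ref{vertops}) and the HS Leibniz rule. Writing $Y(Y(u,z_1)v,z_2)w{=}\sum_{m}z_1^{m}Y(D_m(u)v,z_2)w{=}\sum_{m,n}z_1^{m}z_2^{n}D_n(D_m(u)v)w$ and expanding $D_n(D_m(u)v)$ via the HS-derivation rule gives
\begin{equation*}
Y(Y(u,z_1)v,z_2)w{=}\sum_{m,i,j\geq 0} D_i(D_m(u))\,D_j(v)\,w\,z_1^{m}z_2^{i+j}.
\end{equation*}
On the other side, $Y(u,z_1{+}z_2)Y(v,z_2)w{=}\sum_{n,j}D_n(u)D_j(v)w\,z_2^{j}(z_1{+}z_2)^{n}$, and binomially expanding $(z_1{+}z_2)^{n}$ and reindexing $n{=}m{+}i$ yields
\begin{equation*}
Y(u,z_1{+}z_2)Y(v,z_2)w{=}\sum_{m,i,j\geq 0}\binom{m{+}i}{m}D_{m+i}(u)\,D_j(v)\,w\,z_1^{m}z_2^{i+j}.
\end{equation*}

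Matching coefficients of $z_1^{m}z_2^{i+j}$ reduces associativity to the requirement that $D_i(D_m(u))\,D_j(v)\,w{=}\binom{m{+}i}{m}D_{m+i}(u)\,D_j(v)\,w$ for every $u,v,w{\in}k$ and every $m,i,j{\geq}0$. Specializing $v{=}w{=}1$, $j{=}0$ collapses this to (\ref{iterative}), proving the necessity of iterativity. For the sufficiency, once (\ref{iterative}) holds the coefficient identity holds at the level of scalars in $k$, so multiplying through by $D_j(v)w$ recovers the full associativity relation.

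I do not anticipate a serious obstacle here: once one commits to checking weak associativity on the monomial basis, iterativity essentially reads off the comparison of coefficients. The only place to be a bit careful is the bookkeeping in the binomial re-expansion on the right-hand side, and making sure that in the converse direction one really can isolate the ``bare'' iterativity identity (\ref{iterative}) without extra spurious conditions from higher $j$---which is why the specialization $v{=}w{=}1$, $j{=}0$ is the natural test case. Everything else is the standard verification that an HS-derivation on a commutative ring automatically supplies vacuum, creation, and commutativity of the induced vertex operators.
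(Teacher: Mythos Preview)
Your approach is correct and is essentially the same as the one the paper uses. The paper does not actually reprove Theorem~\ref{thm3.1} (it simply cites \cite{M3}), but it \emph{does} prove the $F$-generalization, Theorem~\ref{thmVF}, and your argument is precisely the specialization $F{=}F_a$ of that proof: for $(ii){\Rightarrow}(i)$ the paper sets $b{=}c{=}1$ (your $v{=}w{=}1$) to extract the derivation identity, and for $(i){\Rightarrow}(ii)$ it expands the left side via the HS Leibniz rule and then invokes the derivation identity to collapse the double sum, exactly as you do.

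One small expository point worth tightening: when you write ``matching coefficients of $z_1^{m}z_2^{i{+}j}$ reduces associativity to $D_i(D_m(u))D_j(v)w{=}\binom{m{+}i}{m}D_{m{+}i}(u)D_j(v)w$ for every $m,i,j$'', that is not literally what coefficient comparison gives, since for fixed $m$ and fixed exponent $N{=}i{+}j$ one only obtains the equality of the sums $\sum_{i{+}j{=}N}(\cdots)$. Your argument is nonetheless fine, because in the necessity direction your specialization $v{=}1$ kills all terms with $j{\geq}1$ (as $D_j(1){=}0$), so the sum collapses to the single term $i{=}N$, $j{=}0$ and iterativity follows; and in the sufficiency direction, iterativity gives the term-by-term equality, hence the equality of sums. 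So the logic is sound, only the intermediate sentence overstates what raw coefficient matching yields.
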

\noindent

\medskip
In the spirit of Li's work and Problem 3, we now ask:
\begin{eqnarray*}
\emph{\mbox{what happens if, in the above construction, we replace $\underline{D}$ by a HS $F$-derivation?}}
\end{eqnarray*}

We will get what can only be described as a vertex ring modified by an HS $F$-derivation.\ So we call this an \emph{$F$-vertex ring}.\ It is certainly \emph{not}
a vertex ring in the usual  sense unless $F{=}F_a$ is the additive FGL, because only in this case will $\underline{D}$ be iterative.\ So it must be that
one of the locality axioms is changed, and indeed this is the case.\ We have the following generalization of Theorem \ref{thm3.1}.

\begin{thm}\label{thmVF} Let $(k, \underline{D}, F)$ consist of a unital commutative ring $k$, a HS derivation $\underline{D}$ of $k$, and a FGL $F$ over $k$.\
Let vertex operators be defined by (\ref{vertops}).\ Then the following are equivalent{:}
\begin{eqnarray*}
&&(i)\ \ \underline{D}\ \mbox{is an HS $F$-derivation} \\
&&(ii)\ Y(Y(a, z)b), w)c{=}Y(a, z{+}_F\ w)Y(b, w)c\ \ (a, b, c{\in}k)
\end{eqnarray*}
\end{thm}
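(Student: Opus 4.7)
The plan is to expand both sides of (ii) directly using the definition (\ref{vertops}) and the Hasse-Schmidt product rule, then read off identity (\ref{HSF}) by specializing the variables $b, c$. The computation is purely formal; the equivalence drops out after rearranging a triple sum and taking $b=c=1$ (the vacuum).

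For the right-hand side, $Y(b,w)c = \sum_{q\geq 0} D_q(b)c\, w^q \in k[[w]]$, and because $F(z,w)$ has no constant term the series $\sum_n D_n(a) F(z,w)^n$ applied to this coefficient-wise is a well-defined element of $k[[z,w]]$ (each fixed bidegree receives finitely many contributions). Using commutativity of $k$ to pull out the $q$-sum,
\begin{eqnarray*}
Y(a, z{+}_F w) Y(b, w) c = \left(\sum_{q\geq 0} D_q(b) c\, w^q\right)\sum_{n\geq 0} D_n(a) F(z,w)^n.
\end{eqnarray*}
For the left-hand side, $Y(a,z) b = \sum_{i\geq 0} D_i(a) b\, z^i$, and the Hasse-Schmidt product rule $D_j(D_i(a)\,b) = \sum_{p+q=j} D_p(D_i(a)) D_q(b)$ yields
\begin{eqnarray*}
Y(Y(a,z) b, w) c = \sum_{i, p, q\geq 0} D_p(D_i(a)) D_q(b) c\, z^i w^{p+q} = \left(\sum_{q\geq 0} D_q(b) c\, w^q\right)\sum_{i, p\geq 0} D_p(D_i(a)) z^i w^p.
\end{eqnarray*}

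Thus (ii) is equivalent to
\begin{eqnarray*}
\left(\sum_{q\geq 0} D_q(b) c\, w^q\right)\!\left[\sum_{i, p\geq 0} D_p(D_i(a)) z^i w^p - \sum_{n\geq 0} D_n(a) F(z,w)^n\right] = 0
\end{eqnarray*}
holding for all $a, b, c \in k$. Identity (\ref{HSF}) (with $X=z$, $Y=w$) says exactly that the bracketed factor vanishes, so (i)$\Rightarrow$(ii) is immediate. Conversely, setting $b=c=1$ collapses the prefactor to $1$ because $D_0(1)=1$ and $D_j(1)=0$ for $j\geq 1$, forcing the bracketed expression to vanish in $k[[z,w]]$ for every $a\in k$, which is precisely (\ref{HSF}). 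The only genuine obstacle is notational bookkeeping --- organizing the triple sum on the left so that the common factor $Y(b,w)c$ separates cleanly --- and once this is done no further input is required.
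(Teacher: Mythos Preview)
Your proof is correct and follows essentially the same route as the paper's: both arguments expand the left-hand side of (ii) using the Hasse-Schmidt product rule, compare with the right-hand side, and specialize $b=c=1$ for the direction (ii)$\Rightarrow$(i). Your presentation is slightly more streamlined in that you factor out the common prefactor $\sum_q D_q(b)c\,w^q$ from both sides at once, whereas the paper treats the two implications separately; but the underlying computation is the same.
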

\begin{proof} (ii)$\Rightarrow$(i).\ Take $b{=}c{=}1$ in (ii) to obtain
\begin{eqnarray*}
&&Y(Y(a, z)1, w)1{=}Y(a, z{+}_F\ w)Y(1, w)1\\
\Rightarrow&&Y\left(\sum_{m\geq 0} D_m(a)z^m, w\right){=}\sum_{m\geq 0}D_m(a)(z{+}_F\ w)^m\\
\Rightarrow&&\sum_{m\geq 0}\sum_{\ell\geq0} D_{\ell}(D_m(a)) w^{\ell}z^m{=}\sum_{m\geq0} D_m(a)(z{+}_F\ w)^m.
\end{eqnarray*}
This is (\ref{HSF}), so (i) holds.

\medskip\noindent
$(i)\Rightarrow(ii)$.\ The left-hand-side of (ii) is equal to
\begin{eqnarray*}
&&\sum_{n\geq0} D_n\left(\sum_{m\geq0} D_m(a)bz^m \right)cw^n\\
{=}&&\sum_{i, j\geq 0} \sum_m D_i\circ D_m(a)D_j(b)cz^m w^{i+j}\ \ \ (\mbox{using the HS property})\\
{=}&&\sum_{k\geq0} D_k(a)(z{+}_F\ w)^k\sum_{j\geq0} D_j(b)cw^j\ \ \ (\mbox{using the $F$-derivation property}) \\
{=}&& Y(a, z{+}_F\ w)Y(b, w)c,
\end{eqnarray*}
and this is the right-hand-side of (ii).\ 
The Theorem is proved.\ 
\end{proof}

\medskip
The thrust of Problem 2 should now be evident.\ It is asking for the generalization of Theorem \ref{thmVF} to \emph{arbitrary} vertex $k$-algebras $V$.\ How do we modify 
$V$ using a FGL $F$, and how are the vertex ring axioms affected?

\medskip
The general case is a good deal more complicated than the special case that we just handled.\  Although
HS derivations figure prominently in all vertex rings (cf.\ \cite{M3}, esp.\ Sections 3-5) it is by no means true in general that $\underline{D}$ determines the vertex operators 
as it does for $(k, \underline{D})$.\ Furthermore in the general case $\underline{D}$ intervenes in the \emph{translation-covariance} axiom (loc.\ cit.), something we did not have to consider in the special case.\ What seems clear is that the HS-derivation
will have to be an HS $F$-derivation just as before.\ But the $F$-weak associativity axiom (this is Li's name for (ii) in Theorem \ref{thmVF}) will almost certainly get generalized to read
\begin{eqnarray}\label{Fweak}
(z{+}_F\ w)^N Y(Y(a, z)b), w)c{=}(z{+}_F\ w)^NY(a, z{+}_F\ w)Y(b, w)c
\end{eqnarray}
($N\gg0$).

\medskip
To recapitulate, we do not want to suggest that Theorem \ref{thmVF} remains true if we simply replace (ii) by (\ref{Fweak}).\ We are suggesting that (\ref{Fweak}) together with
HS $F$-derivations will be key ingredients in the solution of Problem 2.\ One thing is sure{:}\ a close reading of \cite{Li2} will be required!

\section{Vertex operator algebras and modular linear differential equations}\label{S4}

\noindent
$\mathbf{Problem\ 4}.$   State and prove the $3$-dimensional Mathur-Mukhi-Sen theorem.

\medskip 
If I had to name a single paper that has most influenced my thinking about vertex operator algebras, then  the answer would undoubtedly be
Zhu's work on modular-invariance \cite{Z}, which was essentially his Phd\ thesis.\ Zhu deals with a class of nice VOAs that are simple, of CFT-type, rational, and $C_2$-cofinite.\ Thus $V$ is nearly strongly regular, but without any assumptions about existence of an invariant bilinear form, which is not relevant to Zhu's analysis.\ We will simply paraphrase these conditions by saying (somewhat inaccurately) that $V$ is rational.\ Thus $V$ has only finitely many irreducible modules.\ Let's name them $M^1, \hdots, M^p$ and let the conformal weight of $M^i$ be $h^i$.\ If $V$ has central charge $c$ then the \emph{formal graded dimension}, or \emph{$q$-character},  of $M^i$ is defined to be
\begin{eqnarray}\label{q-exp}
Z_i(q){:=}\Tr_{M^i} q^{L(0)-\tfrac{c}{24}}{=}q^{h^i-\tfrac{c}{24}}\sum_{n\geq 0} \dim M^i_n,
\end{eqnarray}
where at first we are obliged to treat $q$ as a formal variable.

\medskip
Zhu's paper is concerned with these $q$-characters.\ He pioneered the use of \emph{differential equations} in
VOA theory by proving that the $q$-characters are \emph{convergent}.\ The method is  well worth studying.\ One uses the Virasoro operators  to show that $Z_i(q)$ satisfies a differential equation which has a \emph{regular singularity} at $q{=}0$.\ The \emph{Frobenius method} then proves convergence in a deleted neighborhood of $q{=}0$.\ (There are many references for background on the theory of differential equations that we need here, ranging from the old-fashioned
\cite{I} and the similar but easier-to-read \cite{Hi}, to the more modern \cite{PS}.)\ 

\medskip
From the perspective of a dyed-in-the-wool algebraist, the beauty of this approach is that it is purely formal, and can be used to great effect in VOA theory.\ All of the estimates and analysis are taken care of by Frobenius!\ At the same time, Zhu proves that if we set $q{:=}e^{2\pi i\tau}$ with $\tau$ in the complex upper half-plane $\mathbf{H}$, then $Z_i(q)$ is just the $q$-expansion
of a periodic holomorphic function $Z_i(\tau)$ in $\mathbf{H}$.\ Thus each $Z_i(\tau)$ has  a certain translation-invariance property.\ Zhu's main theorem is a related and more trenchant invariance result.\ To describe it, introduce the complex linear space 
\begin{eqnarray*}
\frak{ch}_V{:=}\langle Z_1(\tau), \hdots, Z_p(\tau)\rangle\subseteq F.
\end{eqnarray*}
Here, we consider $\frak{ch}_V$ as a subspace of the space $F$ of all holomorphic functions in $\mathbf{H}$.\ Zhu proved that $\frak{ch}_V$ furnishes a representation of the inhomogeneous modular group  $\Gamma{:=}SL_2(\mathbf{Z})$.\ This means that if $\gamma{\in}\Gamma$ then there are scalars $c_{ij}(\gamma){\in}\mathbf{C}$ (independent of $\tau$) such that
\begin{eqnarray*}
Z_i(\gamma\tau){=}\sum_j c_{ij}(\gamma)Z_j(\tau),
\end{eqnarray*}
where we are using standard notation for the fractional linear action $\Gamma{\times}\mathbf{H}{\rightarrow}\mathbf{H}$ given by
\begin{eqnarray*}
(\gamma, \tau){:=}\left(\left(\begin{array}{cc}a & b\\ c & d\end{array}\right), \tau\right)\mapsto \gamma\tau{:=}\frac{a\tau+b}{c\tau+d}.
\end{eqnarray*}

\medskip
For example, if $V$ is \emph{holomorphic} in the sense that it has a \emph{unique} irreducible module ($p{=}1$ and $M^1{=}V$) then Zhu's theorem says that
$Z_1(\tau)$ satisfies
\begin{eqnarray*}
Z_1(\gamma\tau){=}c_{11}(\gamma)Z_1(\tau)\ \ (\gamma{\in}\Gamma)
\end{eqnarray*}
The assignment $\chi{:} \gamma\mapsto c_{11}(\gamma)$ is the representation (a \emph{character} of $\Gamma$ in this case) furnished by $\frak{ch}_V$.\ Thus $Z_1(\tau)$ is a \emph{modular function of weight $0$ and level $1$} with character $\chi$.\ The theory of modular
functions provides a complete description of such functions.\ For example, if $\chi{=}\mathbf{1}$ is the \emph{trivial character} then the
modular functions in question constitute the field $\mathbf{C}(j)$ of rational functions in the absolute modular invariant
\begin{eqnarray*}
j(\tau){=} q^{-1}{+}744{+}196884q{+}\hdots
\end{eqnarray*}

The character $\chi$ does not provide any real difficulty.\ If it is nontrivial then it has order  $3$, and the corresponding class of modular functions is readily described.\ We will not need it here.

\medskip
Attempting to usefully organize all holomorphic VOAs $V$ into some type of classification scheme seems problematic.\ There are infinitely many self-dual, positive-definite, integral lattices $L$, all of which realize holomorphic lattice theories $V_L$, moreover the class of such VOAs is closed with respect to tensor products.\ The $q$-character of $V$ is probably 
as good an  invariant as we can expect, although it does \emph{not} always distinguish between VOAs.\ For example, the two self-dual, rank 16 lattices
  $L_1{:=}E_8{+}E_8$ and $L_2{:=}\Gamma_{16}$ (a spin lattice with root system $D_{16}$) have the same theta-function and therefore
  they define holomorphic lattice theories $V_{L_1}, V_{L_2}$ that are \emph{not} isomorphic but have  \emph{the same} $q$-characters.
  
  \medskip
  This brings us to the Problem stated at the beginning of this Section which concerns a case when $p{>}1$, so that $V$ has more than 
  one\footnote{Of course, this means more than one isomorphism class of irreducible modules} irreducible module.\ Unlike the holomorphic case, there is some hope of classifying such VOAs based on the theory of \emph{modular linear differential equations} (MLDEs).\ For an introduction to this subject (but \emph{not}  its applications in VOA theory) cf.\ \cite{FM};\ we will need some details here.
  
  \medskip
  Fix an integer $k$.\ There is a right action of $\Gamma$  on $F$ defined by
  \begin{eqnarray*}
(f(\tau), \gamma)\mapsto f|_k\gamma (\tau){:=}(c\tau{+}d)^{-k} f(\gamma\tau).
\end{eqnarray*}
Here, and below, we take $\gamma{=}\left(\begin{array}{cc}a & b \\ c & d\end{array}\right)$.

\medskip
Let $F_k$ denote this $\Gamma$-module.\ The \emph{modular derivative in weight $k$} is the differential operator
\begin{eqnarray*}
D_k{:}F{\rightarrow}F, \ \ f\mapsto D_kf{:=} \frac{1}{2\pi i}\frac{df}{d\tau}{-}\frac{k}{12}E_2(\tau)f,
\end{eqnarray*}
where $E_2(\tau)$ is the weight 2 Eisenstein series
\begin{eqnarray*}
E_2(\tau){:=} 1{-}24\sum_{n\geq 1} \sum_{d{\mid}n} dq^n.
\end{eqnarray*}
The  strange-looking operator $D_k$ is important because
\begin{eqnarray*}
D_k{:}F_k\rightarrow F_{k+2}
\end{eqnarray*}
\emph{intertwines} the $\Gamma$-modules $F_k$ and $F_{k+2}$.\ This amounts to the standard formula \cite{L}, Chapter X{:}
\begin{eqnarray}\label{intertwine}
(D_kf)|_{k+2}\gamma(\tau){=} D_k(f|_k\gamma)(\tau).
\end{eqnarray}
 By iteration we obtain
differential operators
\begin{eqnarray*}
D_k^n{:=} D_{k+2n-2}\circ\cdots\circ D_{k+2}\circ D_k{:}F_k\rightarrow F_{k+2n}.
\end{eqnarray*}

\medskip
Let $\widetilde{F}_k$ be the subspace of $F_k$ consisting of holomorphic functions that are `meromorphic at $q{=}0$'.\ 
$\widetilde{F}_k$ is not a $\Gamma$-submodule, however we have
\begin{eqnarray*}
D_k{:}\widetilde{F}_k\rightarrow \widetilde{F}_{k+2}.
\end{eqnarray*}
For example, $E{\subseteq}F_0$ is a $\Gamma$-submodule by Zhu's theorem and $E{\subseteq}\widetilde{F}_0$ because the condition of
meromorphy holds thanks to the $q$-expansions (\ref{q-exp}).\ Hence $D_k^n(E)$ is a $\Gamma$-module contained in $\widetilde{F}_{2n}$

\medskip
The graded algebra of classical holomorphic modular forms is
\begin{eqnarray*}
M{:=}\oplus_{k\geq 0} M_{2k}
\end{eqnarray*}
where $M_{2k}$ is the subspace of $\widetilde{F}_{2k}^{\Gamma}$ (the $\Gamma$-invariants with a $q$-expansion) that are `holomorphic at $\infty$'.\ 
These are the weight $2k$ modular forms.\ Now we are prepared to define an MLDE of  \emph{order $n$ and weight $(2k, 0)$}.\ This is a DE of the type 
\begin{eqnarray}\label{DE}
\left(\sum_{j=0}^nP_{2k+2n-2j}(\tau)D_0^j\right)u{=}0\ \ (P_{2\ell}(\tau){\in}M_{2\ell}).
\end{eqnarray}
When written using the usual derivative 
$\frac{d}{d\tau}$, (\ref{DE}) is a standard DE of order $n$ with coefficients in $M$.\ Consequently, the space of solutions is an $n$-dimensional linear space.\
Solutions may develop singularities at the zeros of the leading coefficient $P_{2k}(\tau)$ but are holomorphic elsewhere.

\medskip
Where is all of this leading? 
We have purposely avoided going into the theory of vector-valued modular forms, which  is actually intimately related to MLDEs.\ But by combining these two strands,
the following result can be proved (cf.\  \cite{M2}, \cite{MM}){:}
\begin{eqnarray}\label{chV}
\mbox{There is an MLDE whose solution space is exactly $\frak{ch}_V$.} 
\end{eqnarray}
Given Zhu's theory as we have outlined it, no further VOA input is needed for this result. It is  purely a matter of vector-valued modular form theory and
MLDEs.\ And in fact the theory says more than just (\ref{chV}), for in some important cases (discussed further below) it says that the MLDE can be chosen to take a specific form.

\medskip
The reason we use MLDE's rather than ordinary DEs is that there is an evident natural action of $\Gamma$ on the solution space.\ Indeed, if
$u(\tau)$ is a solution then $\gamma{\in}\Gamma$ acts via the stroke operator $u|_0\gamma(\tau)$.\ This is obvious if the solution space is $\frak{ch}_V$
for some $V$, indeed the action is just the one we described before that was found by Zhu, but it is true in general thanks to (\ref{intertwine}).\ Let's denote this representation of 
$\Gamma$ by $\rho$.\ It is in fact the \emph{monodromy representation} and one of the main features of the MLDE.\ Thus 
\begin{eqnarray*}
\mbox{\emph{the monodromy $\rho$ of an MLDE is a representation of the modular group}.}
\end{eqnarray*}

$\frak{ch}_V$ and its associated MLDE are beautiful invariants of the VOA $V$, and may be compared to the usual $S$- and $T$-matrices of RCFT, or the
modular tensor category that is $V$-Mod.\ $\frak{ch}_V$ neatly packages $S$- and $T$-matrices coming  from the monodromy representation, as well as the $q$-characters of irreducible $V$-modules that span the solution space of the MLDE.\ 
This circumstance permits us to combine techniques from the theory of modular forms and the theory of DEs in order to study $\frak{ch}_V$.

\medskip
The Problem stated at the beginning of this Section is part of the program  
\begin{eqnarray*}
\mbox{\emph{Classify rational VOAs according to $\frak{ch}_V$ and their associated MLDEs}}.
\end{eqnarray*}

In fact this effort was initiated in 1987  by the physicists Mathur, Mukhi and Sen \cite{MMS}.\ They obtained a partial result in the $2$-dimensional case
that was recently completed in \cite{MNS}.\ These papers deal with the case of \emph{monic} MLDEs of order $2$.\ Monic here
means that the leading coefficient $P_{2k}(\tau)$ in (\ref{DE}) is equal to $1$, and in particular $k{=}0$.\ Thus the DE in question is the simplest possible
order 2 MLDE, which is 
\begin{eqnarray}\label{MLDE2}
\left(D_0^2{+}\kappa E_4(\tau)\right)u{=}0\ \ (\kappa{\in}\mathbf{C})
\end{eqnarray}
where $E_4$ is the usual normalized Eisenstein series of weight $4$.\ (This comes about after perusal of (\ref{DE}) just because $M_2{=}0$ and
$M_4{=}\mathbf{C}E_4$.)

\medskip
Here are the known rational VOAs with the property that $\dim\frak{ch}_V{=}2$ and which have \emph{irreducible monodromy}.\ All but one  are affine algebras of level $1$, the other is the Yang-Lee minimal model{:}
\begin{eqnarray*}
L(A_1, 1), L(A_2, 1), L(D_4, 1), L(E_6, 1), L(E_7, 1), L(G_2, 1), L(F_4, 1), Vir_{c_{2, 5}}.
\end{eqnarray*}
These rational VOAs are almost characterized by this property in \cite{MNS}, Theorem 1.\ But it is convenient to distinguish two cases:
\begin{eqnarray*}
&&(a)\ \mbox{$V$ has exactly two irreducible modules.}\\
&&(b)\ \mbox{$V$ has \emph{more} than two irreducible modules, but $\dim\frak{ch}_V{=}2$.}
\end{eqnarray*}

Now we can state the \emph{$2$-dimensional Mathur-Mukhi-Sen Theorem}{:}
\begin{thm}\label{thm4.1}(\cite{MMS}, \cite{MNS} Theorem 2)
Suppose that $V$ is a strongly regular\footnote{Here we need the full force of strong regularity.\ That's because this condition is
assumed in some of the Theorems in the literature needed to prove  Theorem \ref{thm4.1}.} VOA satisfying (a).\ Assume that the  associated MLDE has the form (\ref{MLDE2}) and that
it has \emph{irreducible} monodromy.\ Then $V$ is isomorphic to one of the following{:}
\begin{eqnarray*}
L(A_1, 1), L(E_7, 1), L(G_2, 1), L(F_4, 1), Vir_{c_{2, 5}}.
\end{eqnarray*}
$\hfill\Box$
\end{thm}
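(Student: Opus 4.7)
The plan is to reduce the theorem to a finite computation via the Frobenius method at $q=0$, then use hypothesis (a), irreducible monodromy, and strong regularity to identify each surviving candidate.

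First I would read off the characteristic exponents. Writing $\theta := q\frac{d}{dq}$, so that $D_0 = \theta$ on weight-$0$ forms, equation (\ref{MLDE2}) becomes
\[
\theta^2 u - \tfrac{1}{6}E_2 \theta u + \kappa E_4 u = 0,
\]
with indicial equation $\alpha^2 - \tfrac{\alpha}{6} + \kappa = 0$ at $q = 0$. Under hypothesis (a), $\mathfrak{ch}_V$ coincides with the two-dimensional solution space of (\ref{MLDE2}), so the two roots must match the two exponents $-c/24$ and $h - c/24$ coming from (\ref{q-exp}), where $h$ is the conformal weight of the unique nontrivial irreducible module. Vieta immediately gives $h = (c+2)/12$ and $\kappa = -c(c+4)/576$, so (\ref{MLDE2}) is parametrised by the single rational number $c$.

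Second I would extract dimension data. Substituting $Z_V(q) = q^{-c/24}(1 + a_1 q + \cdots)$ into the MLDE and matching the coefficient of $q^{-c/24+1}$ yields after a short calculation
\[
\dim V_1 \;=\; a_1 \;=\; \frac{c(5c+22)}{10-c}.
\]
Strong regularity of $V$ forces $V_1$ to be a reductive Lie algebra (Dong--Mason), pins down $c \in \mathbf{Q}$ (\cite{DLM}), and imposes standard rank bounds in terms of the effective central charge $c_{\text{eff}} = c - 24\min(0,h)\geq 0$. Requiring $\dim V_1\in\mathbf{Z}_{\ge 0}$ together with non-negative integrality of all further Frobenius coefficients of $Z_V$ and of the companion solution $Z_{M^2}(q) = D\, q^{h-c/24}(1 + b_1 q + \cdots)$ leaves only the Deligne-exceptional values $c\in\{1,2,14/5,4,26/5,6,7,8\}$, realised by $L(\mathfrak{g},1)$ for $\mathfrak{g}\in\{A_1,A_2,G_2,D_4,F_4,E_6,E_7,E_8\}$, together with the isolated value $c=-22/5$ corresponding to $\dim V_1=0$.

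Third I would prune and identify. The value $c=8$ is excluded because $L(E_8,1)$ is holomorphic (only one irreducible module, contradicting $p=2$), and $c\in\{2,4,6\}$ are excluded because $L(A_2,1),L(D_4,1),L(E_6,1)$ each have more than two irreducible modules; their 2-dimensional character spaces arise from coincidences among the $Z_i$, the situation handled in case (b). The irreducibility of monodromy is invoked here to rule out any remaining reducible 2-dimensional $\Gamma$-representations that would force $\mathfrak{ch}_V$ to decompose. For each surviving $c\in\{-22/5,1,14/5,26/5,7\}$ with $V_1\ne 0$, the Lie algebra $V_1$ has the correct dimension and Sugawara central charge to match a unique simple $\mathfrak{g}$ at level $1$, yielding an embedding $L(\mathfrak{g},1)\hookrightarrow V$ via the Sugawara construction; comparison of $q$-characters (already determined by the MLDE) then forces $V = L(\mathfrak{g},1)$. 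The case $c=-22/5$, $V_1=0$, is identified with $Vir_{c_{2,5}}$ by invoking the classification of strongly regular VOAs at this central charge with trivial weight-one component.

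The main obstacle is the final identification step rather than the Frobenius bookkeeping: one needs the full strength of strong regularity to pass from matching central charge, matching $V_1$, and matching $q$-characters to a genuine VOA isomorphism. For the affine cases this requires the Sugawara embedding together with uniqueness of level-$1$ affine VOAs; for the Virasoro case a separate classification is invoked. Precisely these rigidity results were missing from the original \cite{MMS} analysis and had to be supplied by \cite{MNS}, and any proof of the 3-dimensional analogue sought in Problem 4 will have to confront a substantially more intricate version of the same difficulty.
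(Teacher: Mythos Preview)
The paper does not prove this theorem at all: the statement ends with $\Box$ and is attributed to \cite{MMS} and \cite{MNS}. The only information the paper offers about the argument is the one-sentence summary a few paragraphs later: ``In the proof of Theorem \ref{thm4.1} in \cite{MNS} one first shows that the assumptions of the Theorem imply that there are only finitely many possible values of $c$, then a detailed analysis of each possibility yields the final answer.'' Your outline is precisely this two-stage program, fleshed out with the indicial-equation computation, the Deligne dimension formula $\dim V_1 = c(5c+22)/(10-c)$, and the Sugawara identification, so it is consistent with what the paper reports and with the known proof in the literature.

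One small logical wrinkle in your third step: you exclude $c\in\{2,4,6\}$ \emph{before} carrying out the identification, on the grounds that $L(A_2,1)$, $L(D_4,1)$, $L(E_6,1)$ have too many simple modules. But at that point you have not yet shown that a hypothetical $V$ with, say, $c=2$ and $\dim V_1=8$ must actually be $L(A_2,1)$; a priori some other strongly regular VOA with exactly two simple modules might share those parameters. The cleaner order is to run the Sugawara embedding and character-matching argument uniformly for all the surviving Deligne values, conclude $V\cong L(\frakg,1)$ in each case, and \emph{then} discard those $\frakg$ whose level-$1$ affine VOA violates hypothesis (a). This is how the argument in \cite{MNS} is organised, and it is also what makes your closing remark about the identification step being the real bottleneck exactly right.
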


The gist of Problem 4 is to prove a $3$-dimensional analog of Theorem \ref{thm4.1}, preferably assuming only $\dim\frak{ch}_V{=}3$ but perhaps using an 
analog of (a).\ In any case the MLDE looks like
\begin{eqnarray}\label{hyperg}
\left(D_0^3{+}\kappa E_4(\tau)D_0{+}\lambda E_6(\tau)\right)u{=}0 \ \ \ (\kappa, \lambda{\in}\mathbf{C}).
\end{eqnarray}
Typically the monodromy of such an MLDE will be irreducible, but that will \emph{not} always be the case.\ An important point that will certainly figure in the solution of
Problem 4 is that (\ref{hyperg}) is actually a disguised version of a \emph{hypergeometric equation} solved by certain hypergeometric functions ${_3}F_2$ (\cite{FM}, \cite{FM1}).

\medskip
The list of rational VOAs with $\dim\frak{ch}_V{=}3$ is bewilderingly diverse, and includes the following{:}
\begin{eqnarray}\label{VOAlist}
&&L(A_3, 1), L(A_4, 1),  L(C_2, 1), L(A_2, 2), L(E_8, 2),\notag\\
&& L(D_{\ell}, 1)\ (\ell{\geq}5),\ \ L(B_{\ell}, 1)\  (\ell{\geq}3),\notag\\
&& Vir_{c_{3, 4}}, Vir_{c_{2, 7}}, Vir_{c_{2, 5}}{\times}Vir_{c_{2, 5}}\\
&& V_{\sqrt{2}E_8},\ V_{\Lambda},\ VB^{\natural}.\notag\\
&& V^+_{\sqrt{2}E_8},\ V^+_{\Lambda}.\notag
\end{eqnarray}
Not all of these examples have irreducible monodromy.\ Among the last five examples are lattice theories $V_L$ with $L$ either a rescaled $E_8$ root lattice or the Barnes-Wall lattice 
$\Lambda$ of rank $16$ together with their $\mathbf{Z}_2$-orbifolds.\ The other one  is the Baby Monster VOA $VB^{\natural}$ \cite{H}.\ 

\medskip
I compiled this list from  various interesting papers  done in support of Problem 4 (and related problems) by Arike, Kaneko, Nagatomo and Sakai, including  characterizations of some of the VOAs \cite{AKNS}, \cite{AN}, \cite{ANS}.\ However we are still far from a complete solution.\

\medskip
Although Problem 4 is an obvious extension of  Theorem \ref{thm4.1}, fresh ideas are needed for a complete solution of Problem 4.\ This is because the list (\ref{VOAlist}) contains \emph{infinitely many} different VOAs, and  in particular there is no bound on the possible central charge $c$.\ In the proof of 
Theorem \ref{thm4.1} in \cite{MNS} one first shows that the assumptions of the Theorem imply that there are only finitely many possible values of $c$, then a detailed analysis
of each possibility yields the final answer.\ So this approach must be modified.\  A modest start  is made in \cite{MNS1}.

\medskip
I would have liked to discuss some additional questions about the MLDEs attached to rational VOAs, a subject which I find fascinating.\ For example \textit{are these MLDEs  necessarily Fuchsian on the $3$-punctured sphere?}\ However an adequate discussion would require more space than is available here.

\section{Pierce bundles of local vertex rings}

\noindent
$\mathbf{Problem\ 5}.$   Characterize \emph{exchange} vertex rings.

\medskip
In order to explain the meaning of this Problem we will  first make an incursion into the theory of  \emph{Pierce bundles} of a commutative ring.\ This may seem like something of detour to the reader, but it is vital to explain and motivate Problem 5.\
Pierce's original paper \cite{P} is, of course, a good reference for his theory, and the intriguing text of Johnson \cite{J} also deals with this set of ideas in a broad
framework.\ The extension to vertex rings was first given in Part II of \cite{M3}.

\medskip
The \emph{structure sheaf} of a unital, commutative ring $k$ is a fundamental geometric object \cite{EGA} that  has now infiltrated  into graduate algebra texts (e.g., \cite{DF}, Chapter 15).\
The same cannot be said of the Pierce sheaf of $k$, though they are closely related.\ 

\medskip
Let $X{:=}Spec\ k$ be the prime ideal spectrum of $k$ equipped with the Zariski topology.\
One considers the disjoint union $E{:=}\bigcup_P k_P$ of the localizations $k_P$ of $k$ at a prime ideal $P{\subseteq}k$.\ Once $E$ is topologized, we get a bundle of commutative rings $\pi{:}E\rightarrow X$ and the fiber $\pi^{-1}(P)$ over $P$ is just $k_P$.\ Such a bundle gives rise to the structure sheaf $\mathcal{O}$ of $k$; it is a sheaf of rings over $X$ which arises from the  \emph{local sections} of $\pi$ over the open sets of $X$.\ There is a categorical equivalence between such bundles and sheaves over $X$,
so that they carry the same data.

\medskip
In the Pierce construction, one begins with the same $k$ but a very different $X$.\ Namely associated to $k$ is the set of all
\emph{idempotents} $e{\in}k$.\ This set carries the structure of a \emph{Boolean algebra} and it may also be regarded as a second commutative ring in which
multiplication of idempotents $e, f$ is just their product $ef$ in $k$, whereas addition is defined by $e\oplus f{:=}e{+}f{-}2ef$\ ($e{+}f$ is addition in $k$).\ 
Denote this ring by $B{=}B(k)$.\ It is a \emph{Boolean ring} in the sense that every element is idempotent.\ For further details about this construction, cf. \cite{P} and \cite{Jac}, Chapter 8.\

\medskip
For the purposes of the Pierce construction we take $X{:=}Spec\ B(k)$.\ This is an example of a \emph{Boolean space}, namely the Zariski topology on $X$ is both \emph{Hausdorff} and
\emph{totally disconnected} (connected components are single points) and the topology has a \emph{basis of clopen sets}.\ 

\medskip
The \emph{Pierce bundle}
$\pi{:}E\rightarrow X$ of $k$ is defined as follows{:}\ as in the case of the structure sheaf $\mathcal{O}$, $E$ is defined as the disjoint union of what will eventually be the stalks,
and if $P{\in}X$ is a prime ideal of $B$ then $\pi^{-1}(P){:=}k/\overline{P}$ where $\overline{P}{:=}\cup_{e\in P} ek$.\ ($\overline{P}$ really \emph{is} an ideal in $k$, though the union may suggest otherwise).

\medskip
Pierce showed \cite{P} that $k$ can be recovered as the \emph{global sections} of $\pi$.\ Furthermore, the stalks $\pi^{-1}(P)$ are \emph{indecomposable rings}.\
This sets up an equivalence between the category
$Ring$ of commutative rings and the category of \emph{reduced bundles}, which are \`{e}tale bundles of indecomposable rings over a Boolean base space.

\medskip
Pierce's work gave impetus to a cottage industry focused on the question of what commutative rings can arise as global sections of suitably conditioned  bundles of rings?\ 
This is sometimes referred to as representation theory of rings.

\medskip
For example,  Pierce obtained \cite{P} a beautiful  characterization of those commutative rings $k$ having the additional property that the stalks of the Pierce bundle are not just indecomposable, but in fact
\emph{simple}.\ Of course, a simple commutative ring is a \emph{field}, so the question amounts to this:\  which commutative rings have Pierce bundles whose stalks are fields?
Remarkably, these are precisely the commutative \emph{von Neumann regular rings}.\ This class of commutative rings may be characterized in several ways, one of which is that every
principal ideal is generated by an idempotent.\ For the general theory of (not necessarily commutative) von Neumann regular rings, see \cite{G}.

\medskip
As a second illustration, and one which almost  brings us to the meaning of Problem 5, one may pose the following question{:}\  what commutative rings have Pierce bundles whose stalks are
\emph{local rings}?\ This is meaningful inasmuch as a local ring is necessarily indecomposable.\ On the other hand, local rings include fields, so that after Pierce's theorem  the class of rings that we are after here includes all commutative von Neumann regular rings.\ The question was answered by Monk \cite{Monk} following work of Warfield \cite{W}.\ See also \cite{J}, Chapter $V$.\
Monk showed that \emph{exchange rings} are precisely the rings whose Pierce bundles have stalks which are local rings.\ Here, a commutative exchange ring $k$
is defined by the following property{:}
\begin{eqnarray}\label{exchcond}
\mbox{every element of $k$ is the sum of an idempotent and a unit.}
\end{eqnarray}

\medskip
At last we turn to vertex rings.\ In \cite{M3}, Part II, I showed that Pierce's theory as we have sketched it out, extends naturally to the category $Ver$ of 
vertex rings in place of $Ring$.\ In particular, every vertex ring $V$  has a Pierce bundle, which is an \`{e}tale bundle of indecomposable vertex rings
$\pi{:}E\rightarrow X$ over a Boolean base space $X$.\ $V$ may be recovered as the vertex ring of global sections of $\pi$.\ An important point is the origin of the base space $X$.\ Indeed,  define the \emph{center} $C(V)$ of $V$ to consist of states $v{\in}V$ whose vertex operator $Y(v, z){=}v({-}1)$ is \emph{constant}.\ Then $C(V)$ is a unital commutative ring with respect to the ${-}1^{th}$ product in $V$ and a (highly degenerate) vertex subring of $V$.\ Then we take
\begin{eqnarray*}
X{:=} B(C(V)).
\end{eqnarray*}

I also obtained (\cite{M3}, Theorem 11.1) the analog of Pierce's characterization of von Neumann regular rings.\ Surprisingly, the result is a more-or-less \emph{verbatim} restatement of Pierce's theorem, but with `vertex ring' in place of `commutative ring'.\ 

\medskip
To be explicit, let $V$ be a vertex ring.\ The 2-sided principal ideal generated by an element $a{\in}V$ is the intersection of all 2-sided ideals of $V$ that contain $a$, and an idempotent of $V$ is just an idempotent of $C(V)$ regarded as a commutative ring.\ Then we call $V$ a \emph{von Neumann regular vertex ring} if every 2-sided principal ideal of $V$ is of the form $e({-}1)V$ for some idempotent $e{\in}C(V)$.\ Let $E\rightarrow X$ be the Pierce bundle of $V$.\ Then 

\begin{eqnarray}\label{VNVRings}
&&\mbox{$V$ is a von Neumann regular vertex ring if, and only if, the stalks}\\
&&\mbox{of the Pierce bundle are \emph{simple} vertex rings}.\notag
\end{eqnarray}

\medskip
Problem 5 asks for the vertex ring analog of Monk's theorem about exchange  rings.\ 

\medskip
To be clear, 
define a vertex ring $V$ to be a \emph{local vertex ring} if $V$ has a \emph{unique maximal ideal} $J$ ($J$ is maximal if, and only if, $V/J$ is a simple vertex ring).\
Such vertex rings are very familiar.\ If $V$ is a VOA over $\mathbf{C}$ of CFT-type,  then its conformal grading takes the shape
\begin{eqnarray*}
V{=}\mathbf{C}\mathbf{1}{\oplus}V_1{\oplus}V_2\hdots
\end{eqnarray*}
Every such $V$ that is \emph{self-dual} is a local VOA, where $J$ is the radical of any nonzero invariant bilinear form on $V$.

\medskip
Cue Problem 5{:}\ 
\begin{eqnarray*}
&&\mbox{\textit{which vertex rings $V$ have the property that their Pierce bundles}}\\
&&\mbox{\textit{have stalks all of which are local vertex rings?}} 
\end{eqnarray*}
This simultaneously generalizes (\ref{VNVRings}) and Monk's theorem.\ We may call such a vertex ring an \emph{exchange vertex ring}.\ So how should we generalize (\ref{exchcond}) so that it also applies to all vertex rings and not just commutative rings?

\section{Genus~$2$ Monstrous Moonshine}\label{S6}

\noindent
$\mathbf{Problem\ 5}$.\ Construct the genus $2$  Moonshine Module $V^{\natural}$.

\medskip
We have already referred to the influence of Zhu's paper \cite{Z}, in which he treated the arithmetic properties of the characters of modules of a rational VOA.\ In fact Zhu did much more than this.\
He defined $n$-point functions, and implicitly reduced their study to that of $1$-point functions by establishing a recursive formula that they satisfy.\ These functions all reside on a complex torus, i.e., a compact  Riemann surface of genus $g{=}1$.\ On the other hand, and in marked contrast to the physics literature where the use of `higher loop' calculations is routine, there have been relatively few applications of $n$-point functions on a higher genus Riemann surface in the mathematical literature on VOAs.\ 

\medskip
Following his initial paper \cite{T}, Michael Tuite  and I set out to do something about this \cite{MT1}-\cite{MT6}.\ Because the `easiest' higher loop functions to study
ought to be $0$-point functions on a $g{=}2$ Riemann surface, i.e., the \emph{genus $2$ characters of a VOA $V$}, we introduced some functions that we  expect will serve as
the desired genus $2$ characters.\ With the idea of modular-invariance in mind, we anticipated  that these functions would have some $g{=}2$ modular-invariance properties, and for rational $V$ would in fact be Siegel modular forms on  a congruence subgroup of $Sp_4(\mathbf{Z})$.

\medskip
Problem 5 asks for the determination of the genus $2$ character of the Moonshine Module $V{=}V^{\natural}$.\ A precise Conjecture is stated at the end of this Section.\
Although Tuite and I managed to understand the case 
when $V$ is a lattice theory $V_L$ (more on this below) the case of $V^{\natural}$ eluded us.\ I've always found this circumstance to be rather disappointing, and would be delighted to see the solution!\
In the rest of this Section I will give an account of the genus $2$ theory for $V_L$.\ In effect,  problem 5 asks a reproduction of the calculations that follow for related VOAs such as
$V_L^+$ and $\mathbf{Z}_2$-orbifolds of $V_L$.

\medskip
A distinctive feature of the higher genus theory for VOAs $V$  is that it  emphasizes, and depends on, the geometric theory of Riemann surfaces of all genera, not just tori.\ In keeping with Zhu's perspective, we define the higher genus $n$-point functions of $V$ in a sort of recursive manner.\ Thus we assume we have at our disposal the full gamut of  $n$-point functions for $V$ at $g{=}1$, and  then define the higher genus $n$-point functions in terms of them.

\medskip
When $g{=}2$, this leads to the circumstance that we obtain not just one, but  two, different definitions of the genus $2$ character of $V$.\ This is because
there are two rather different ways to construct a $g{=}2$ compact Riemann surface by doing some plumbing (sewing) with $g{=}1$ surfaces (complex tori).\ In the first approach (the so-called $\epsilon$-formalism) we sew together a pair of complex tori $X_1, X_2$ by excising a parameterized disk of radius $\epsilon$ from each $X_i$ and identifying the resulting boundaries.\  Here we are plumbing $X_1$ and $X_2$ by connecting them, if you will, using a cylinder running between the punctures.\ The resulting surface is often denoted by $X_1\#X_2$.\ In the second approach (the $\rho$-formalism)
we start out with a single complex torus $X$, excise a disjoint pair of disks, and attach a handle running from one  to the other.

\medskip
Let $V$ be a VOA.\ For $v{\in}V$ the vertex operator for $v$ is denoted by 
\begin{eqnarray*}
Y(v, z){:=}\sum_{n{\in}\mathbf{Z}} v(n)z^{-n-1}
\end{eqnarray*}
The \emph{zero mode} of a homogeneous state $v{\in}V_k$  is $o(v){:=}v(k{-}1)$.\ This operator preserves all homogeneous spaces $V_{\ell}$.\ For general
$v{\in}V$ written as a sum of homogeneous states we define $o(v)$ by linear extension.\ The $1$-point functions at genus $1$ for $V$ are defined as follows{:}
\begin{eqnarray*}
&&Z^{(1)}(v, \tau){:=}\Tr_V Y(q^{L(0)}v,q)q^{L(0)-c/24}\\
&&{=} \Tr_V q^{L(0){-}c/24}o(v){=}q^{-c/24}\sum_{\ell} \Tr_{V_{\ell}}o(v)q^{\ell}
\end{eqnarray*}
where, as usual, we have set $q{:=}q_{\tau}{:=}e^{2\pi i\tau}$ for $\tau$ in the complex upper half-plane $\mathbf{H}$.\ The $2$-point functions at $g{=}1$ may be defined as follows{:}
\begin{eqnarray*}
&&Z^{(1)}((v_1, q_1), (v_2, q_2)){:=}  \Tr_VY(q_1^{L(0)}v_1, q_1) Y(q_2^{L(0)}v_2, q_2)q^{L(0){-}c/24},
\end{eqnarray*}
where $q_j{:=}q_{\tau_j}$.\ For additional background, cf.\ \cite{Z}, \cite{MT1}, \cite{MT4}, \cite{MT5}.

\medskip
We come to the genus 2 character in the $\epsilon$-formalism, which may be defined for nice enough $V$ as follows{:}
\begin{eqnarray}\label{g=2Z}
Z^{(2)}_{V, \epsilon}(\tau_1, \tau_2, \epsilon){=}\sum_{k\geq 0} \epsilon^k\sum_{u\in V_{[k]}} Z^{(1)}_V(u, \tau_1)Z^{(1)}_V(\overline{u}, \tau_2).
\end{eqnarray}
We must explain the notation.\ We are assuming that $V{=}\mathbf{C}\mathbf{1}{\oplus}V_1{\oplus}\hdots$ is a \emph{self-dual} VOA of CFT-type, so that $V$ comes equipped with a
symmetric, nondegenerate,  invariant, bilinear form.\ Associated to $V$ is an \emph{isomorphic} copy $V[\ ]$ that arises from the \emph{same} underlying linear space
and a change-of-variables vertex operator (loc.\ cit.)
\begin{eqnarray*}
Y[v, z]{:=}Y(q_z^{L(0)}v, q_z{-}1),
\end{eqnarray*}
 and $V[\ ]{:=}\mathbf{C}\mathbf{1}{\oplus}V_{[1]}\oplus \hdots$ is the (isomorphic) conformal grading.\ Let $b$ be the normalized invariant bilinear form
 on $V[\ ]$.\ For each $k$, $b$ sets-up an identification of  $V_{[k]}$ with its dual space, and $\overline{u}{\in}V_{[k]}$ is the
\emph{metric dual} of $u$ with respect to $b$, i.e., $\bar{u}{:=}\sum_i (b, u_i)u_i$ for a basis $\{u_i\}$  of $V_{[k]}$ satisfying $b(u_i, u_j){=}\delta_{ij}$.\
This change of variables corresponds to a pivot from the original VOA $(V, Y, \mathbf{1})$ on a punctured sphere to an isomorphic copy on 
$(V[\ ], Y[\ ], \mathbf{1})$ on a cylinder.\ Then (\ref{g=2Z}) relates $1$-point functions on the cylinder to the plumbed $g{=}2$ Riemann surface $X_1\#X_2$ with parameter
$\epsilon$.

\medskip
We think of $\tau_i$ as the \emph{modulus} of the elliptic curve $X_i$, i.e., the point in $\mathbf{H}$ corresponding to $X_i$.\ Then there is a natural complex domain denoted (somewhat confusingly) by
$D^{\epsilon}$ consisting of the triples $(\tau_1, \tau_2, \epsilon)$ for which the sewing procedure producing $X_1\#X_2$ is defined, and there is an evident  diagram 
\begin{eqnarray*}
\xymatrix{    D^{\epsilon} \ar[rr]^{\Omega^{(2)}}\ar[drr]_{Z_{V, \epsilon}^{(2)}}& &\mathbf{H}^2&\\
&&
  \mathbf{C}&&
}
\end{eqnarray*}
where $\Omega^{(2)}$ is the \emph{period matrix} of $X_1\#X_2$, which lies in the \emph{genus $2$ Siegel upper half space} $\mathbf{H}^{(2)}$.\ (For background, cf.\ \cite{F}).\ It is known \cite{MT2}, \cite{Y} that $\Omega^{(2)}$, which describes  the period matrix in terms of sewing variables,  is a holomorphic map.\ Furthermore $Z^{(2)}$ is holomorphic \cite{MT4}.\ We may then ask{:} 
\begin{eqnarray}\label{Q}
\mbox{does $Z^{(2)}$ factor through $\Omega^{(2)}$?}
\end{eqnarray}

\medskip
There is a parallel story in the $\rho$-formalism, where the genus 2 character is defined by
\begin{eqnarray}\label{g=2R}
Z^{(2)}_{V, \rho}(\tau, \rho){=}\sum_{k\geq 0} \rho^k\sum_{u\in V_{[k]}}Z^{(1)}_V(\overline{u}, u, w,  \tau).
\end{eqnarray}
Recall that in this formalism we are attaching a handle to an elliptic curve $X$ of modulus $\tau$.\ This procedure defines the complex domain $D^{\rho}$ consisting of the
data $(\tau, \rho, w)$  needed to implement the plumbing \cite{MT6}.\ And there is a second diagram of holomorphic maps
\begin{eqnarray*}
\xymatrix{    D^{\rho} \ar[rr]^{\Omega^{(2)}}\ar[drr]_{Z_{V, \rho}^{(2)}}& &\mathbf{H}^2&\\
&&
  \mathbf{C}&&
}
\end{eqnarray*}
Of course we still want to know the answer to (\ref{Q}) in the $\rho$-formalism.

\medskip
Let us now turn to a description of the genus $2$ characters for lattice theories $V{=}V_L$.\ If we want to compare these functions with, say, Siegel modular forms, there is an evident difficulty, namely the \emph{mismatch of variables}.\ For example, the genus 2 theta-function for $L$ is defined in terms of a symmetric matrix $\Omega$ with positive-definite real part, 
that is $\Omega{\in}\mathbf{H}^{(2)}$, which we may think of as the period matrix of a $g{=}2$ Riemann surface:
\begin{eqnarray*}
\Theta_L^{(2)}(\Omega){:=}\sum_{\alpha, \beta{\in}L} exp \left\{  \pi i \left(  (\alpha, \alpha)\Omega_{11}{+}2(\alpha, \beta)\Omega_{12}{+}(\beta, \beta)\Omega_{22}\right) \right\}.
\end{eqnarray*}
This theta-function has the entries $\Omega_{ij}$ of $\Omega$ as variables, and these are vastly different to the variables arising from the sewing procedures.

\medskip
Now the answers to (\ref{Q}) and its $\rho$-analog is  \textit{no, but morally yes}.\ To explain what this means, recall that the genus $1$ character for
a rank $n$ lattice theory $V_L$ is
\begin{eqnarray*}
Z_{V_L}^{(1)}(\tau){=}\frac{\theta_L(\tau)}{\eta(\tau)^n}.
\end{eqnarray*}
This arises from the containment of a rank $n$ Heisenberg subVOA $M^n{\subseteq}V_L$, indeed we may rewrite the last display as 
\begin{eqnarray*}
\frac{Z_{V_L}^{(1)}(\tau)}{Z_{M^n}^{(1)}(\tau)}{=}\theta_L(\tau).
\end{eqnarray*}
Surprisingly,  precisely the same formula holds at genus $2$ in both formalisms \cite{MT2}, \cite{MT6}.\ To be somewhat misleading, we have
\begin{eqnarray*}
\frac{Z_{V_L, \epsilon}^{(2)}(\tau_1, \tau_2, \epsilon)}{Z_{M^n, \epsilon}^{(2)}(\tau_1, \tau_2, \epsilon)}{=}\theta_L^{(2)}(\Omega){=}\frac{Z_{V_L, \rho}^{(2)}(\tau, \rho, w)}{Z_{M^n, \rho}^{(2)}(\tau, \rho, w)}.
\end{eqnarray*}
(It is misleading because $\Omega$ really depends on the moduli, depending on which formalism we are in.)\ A more accurate statement is that the following diagrams commute{:}
\begin{eqnarray*}
\xymatrix{    D^{\epsilon} \ar[rr]^{\Omega^{(2)}}\ar[drr]_{Z_{V_L, \epsilon}^{(2)}/Z^{(2)}_{M^n,\epsilon}}& &\mathbf{H}^2\ar[d]^{\Theta_L^{(2)}}&\\
&&
  \mathbf{C}&&
}
\xymatrix{    D^{\rho} \ar[rr]^{\Omega^{(2)}}\ar[drr]_{Z_{V_L, \rho}^{(2)}/Z^{(2)}_{M^n, \rho}}& &\mathbf{H}^2\ar[d]^{\Theta_L^{(2)}}&\\
&&
  \mathbf{C}&&
}
\end{eqnarray*}

\medskip
We can now prescribe exactly what is needed in Problem 5.\ Regarding any of the VOAs $V{=}V_L^+$ or its $\mathbf{Z}_2$-orbifold, what is required  are analogs of the last display.\ The conjecture is that
there is a Siegel modular form $F$ (depending on $V$) such that the following commute{:}
\begin{eqnarray*}
\xymatrix{    D^{\epsilon} \ar[rr]^{\Omega^{(2)}}\ar[drr]_{Z_{V, \epsilon}^{(2)}/Z^{(2)}_{M^n,\epsilon}}& &\mathbf{H}^2\ar[d]^F&\\
&&
  \mathbf{C}&&
}
\xymatrix{    D^{\rho} \ar[rr]^{\Omega^{(2)}}\ar[drr]_{Z_{V, \rho}^{(2)}/Z^{(2)}_{M^n, \rho}}& &\mathbf{H}^2\ar[d]^F&\\
&&
  \mathbf{C}&&
}
\end{eqnarray*}

In case $L{=}\Lambda$ is the \emph{Leech lattice} and $V{=}V^{\natural}$ is the $\mathbf{Z}_2$-orbifold, alias the Moonshine Module, $F$ will be long sought
 {genus 2 Moonshine character}.\ 
 
 \medskip
 None of this involves the Monster simple group \emph{per se}, however if the existence of $F$ can be established, one expects that similar
 results can be proved for $1$-point functions at $g{=}2$, and also including twisted sectors and group elements along the lines of \cite{DLM}.

\bibliographystyle{amsplain}

\end{document}